\numberwithin{equation}{section}
\newtheorem{theorem}{Theorem}[section]
\newtheorem{lemma}[theorem]{Lemma}
\newtheorem{corollary}[theorem]{Corollary}
\newtheorem{question}[theorem]{Question}
\theoremstyle{definition}
\newtheorem{definition}[theorem]{Definition} 
\newtheorem{remark}[theorem]{Remark}
\begin{document}


\newcommand{\del}{\operatorname{del}}
\newcommand{\lk}{\operatorname{lk}}
\newcommand{\reg}{\operatorname{reg}}
\newcommand{\pd}{\operatorname{pd}}

 
\title[SCM bipartite graphs]
{Sequentially Cohen-Macaulay bipartite graphs: vertex decomposability 
and regularity}
\thanks{Version: June 1, 2009}
 
\author{Adam Van Tuyl}
\address{Department of Mathematical Sciences \\
Lakehead University \\
Thunder Bay, ON P7B 5E1, Canada}
\email{avantuyl@lakeheadu.ca}
\urladdr{http://flash.lakeheadu.ca/$\sim$avantuyl/}
 
\keywords{Sequentially Cohen-Macaulay, edge ideals, 
bipartite graphs, vertex decomposable, 
shellable complex, Castelnuovo-Mumford regularity}
\subjclass[2000]{13F55, 13D02, 05C75}

\begin{abstract}
Let $G$ be a bipartite graph with edge ideal
$I(G)$ whose quotient ring $R/I(G)$ is sequentially Cohen-Macaulay.
We prove: (1) the independence complex
of $G$ must be vertex decomposable, and (2)
the Castelnuovo-Mumford
regularity of $R/I(G)$ can be determined from the invariants of $G$. 
\end{abstract}
 
\maketitle


\section{Introduction} 

Let $G = (V_G,E_G)$ denote a finite simple graph with
vertices $V_G = \{x_1,\ldots,x_n\}$ and edge set $E_G$.  By
identifying the vertices with the variables in the polynomial
ring $R = k[x_1,\ldots,x_n]$, we can associate to each simple
graph $G$ a monomial ideal $I(G) = (\{x_ix_j ~|~ \{x_i,x_j\} \in E_G\})$.
The ideal $I(G)$ is the {\bf edge ideal} of $G$ and was first
introduced by Villarreal \cite{V}.    Also associated to $G$
 is a simplicial complex $\Delta(G)$, called the
{\bf independence complex}, whose faces are the independent
sets of the graph $G$.  That is, $F \in \Delta(G)$ 
if and only if the set $F$
is an independent set of  vertices.  The independence complex
is the simplicial complex associated to $I(G)$ via
the Stanley-Reisner correspondence.

We call a graph $G$ a {\bf sequentially Cohen-Macaulay graph}
if the corresponding ring $R/I(G)$ is sequentially
Cohen-Macaulay (SCM).   In this paper 
we consider SCM graphs $G$ that are also {\bf bipartite},
that is, we can partition $V_G$ as $V_G = V_1 \cup V_2$
so that every edge $e \in E_G$ has one endpoint
in $V_1$ and the other endpoint in $V_2$.  SCM bipartite graphs,
which includes the set of Cohen-Macaulay bipartite graphs,
have been studied in \cite{CFF,EV,F,FH,FVT,HH2,HHZ,VTV}.
With the additional assumption that $G$ is bipartite,
these papers have shown that the 
algebraic property of being SCM (or CM) is really
a combinatorial property.  

In this short note we present two new results that further
highlight the fact that being bipartite and SCM is really
a combinatorial property.  Our first main theorem (Theorem
\ref{maintheorem1}) shows that $G$ is SCM if and only if 
$\Delta(G)$ is a vertex decomposable simplicial complex.
 The notion of a vertex decomposable simplicial complex
was independently introduced to the study of edge ideals
by Dochtermann and Engstr\"om \cite{DE} and Woodroofe \cite{W}.
Our result gives a new proof that $\Delta(G)$ must
also be shellable (as first proved by the author and Villarreal \cite{VTV}).
Our second result (Theorem \ref{maintheorem2}) is a formula for the
Castelnuovo-Mumford regularity in terms of the number
of 3-disjoint edges in a graph.
We recover Zheng's \cite{Z} formula for the regularity
of the edge ideals of trees as corollary.

Although not discussed directly in this paper, unmixed
bipartite graphs (graphs whose minimal vertex covers all
have the same cardinality) were studied
in \cite{HHO,K,MM,V3}.  In this situation, the algebraic properties
of the ring $R/I(G)$ are again highly connected with
the invariants of the graph.  Note that the intersection
of the set of unmixed bipartite graphs and the set 
of SCM graphs is precisely the set of CM graphs.  However, little
appears to be known about the edge ideals of bipartite
graphs that are neither unmixed or SCM;  this appears to be an area
that requires further exploration.  In fact, at the end of
the paper, we raise a question about the regularity of these ideals.

{\bf Acknowledgments.} Part of this paper was written
when the author was visiting the Universit\`a di Catania,
the Universit\`a di Genova, and the Politecnico di Torino in March 2009.  
The author would like
to thank his hosts Elena Guardo (Catania), Tony Geramita (Genova),
and Enrico Carlini (Torino)
for their hospitality.  The author
received financial support from GNSAGA and NSERC while working
on this project.


\section{SCM and Vertex Decomposable Graphs}

In this section we show that
sequentially Cohen-Macaulay bipartite graphs have
independence complexes that are vertex decomposable.
A {\bf simplicial complex} $\Delta$ on
$V = \{x_1,\ldots,x_n\}$ is a collection of subsets of $V$ 
such that:
(1) $\{x_i\} \in \Delta$ for $i =1,\ldots,n$, and (2) if $F \in \Delta$
and $G \subseteq F$, then $G \in \Delta$.  Elements of 
$\Delta$ are called the {\bf faces} of $\Delta$, and the
maximal elements, with respect to inclusion, are called the {\bf facets}.
A simplicial complex is {\bf pure} if all its facets have the
same cardinality.

Vertex decomposability was first introduced by Provan and Billera \cite{PB}
in the pure case, and extended to the non-pure case
by Bj\"orner and Wachs \cite{BW,BW1}.  It is defined in terms
of the deletion and link;   
if $F \in \Delta$ is a face, then the {\bf link}
of $F$ is the simplicial complex
\[\lk_{\Delta}(F) = \{ H \in \Delta ~|~ H \cap F = \emptyset ~\mbox{and}~~
G \cup F \in \Delta\},\]
while the {\bf deletion} of $F$ is the simplicial complex
\[\del_{\Delta}(F) = \{H \in \Delta ~|~ H \cap F = \emptyset \}.\]
When $F = \{x\}$ is a single vertex, we abuse
notation and write $\del_\Delta(x)$ and $\lk_\Delta(x)$.

\begin{definition}
Let $\Delta$ be a simplicial complex on the vertex
set $V = \{x_1,\ldots,x_n\}$.  Then $\Delta$
is {\bf vertex decomposable} if either:
\begin{enumerate}
\item[$(i)$] The only facet of $\Delta$ is $\{x_1,\ldots,x_n\}$,
i.e., $\Delta$ is a simplex, or $\Delta = \emptyset$.
\item[$(ii)$]  there exists an $x \in V$ such that
$\del_{\Delta}(x)$ and $\lk_{\Delta}(x)$ are vertex decomposable,
and such that every facet of $\del_{\Delta}(x)$ is a facet of
$\Delta$.  
\end{enumerate}
 If $\Delta$ is pure, we call $\Delta$ {\bf pure vertex
decomposable}.
\end{definition}

Let $G = (V_G,E_G)$ be a graph;  an {\bf independent
set} of $G$ is a subset $F \subseteq V_G$ such
that $e \not\subseteq F$ for every $e \in E_G$.
The {\bf independence complex}
of $G$, denoted $\Delta(G)$, is the simplicial complex on $V_G$ 
with face set
$\Delta(G) = \{ F \subseteq V_G ~|~ \mbox{$F$ is an independent
set of $G$}\}.$
Since we want to know when $\Delta(G)$
is vertex decomposable, we introduce
the terminology:

\begin{definition}
A finite simple graph $G$ is a {\bf vertex decomposable graph} (or
simply, vertex decomposable) if the independence complex $\Delta(G)$
is vertex decomposable.
\end{definition}

To determine if a graph is vertex
decomposable, we can always make the assumption that
the graph is connected.  The next lemma is Lemma 20 in \cite{W}:

\begin{lemma}\label{connectedcomponent}
Let $G_1$ and $G_2$ be two graphs such that $V_{G_1} \cap
V_{G_2} = \emptyset$, and set $G = G_1 \cup G_2$.
Then $G$ is vertex decomposable if and only if $G_1$ and
$G_2$ are vertex decomposable.
\end{lemma}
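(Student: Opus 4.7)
The key observation is that when $V_{G_1}\cap V_{G_2}=\emptyset$, an independent set of $G=G_1\cup G_2$ is precisely a disjoint union of an independent set of $G_1$ and an independent set of $G_2$. Hence $\Delta(G)$ is the simplicial join $\Delta(G_1)*\Delta(G_2)$. The lemma therefore reduces to showing that a join $\Delta_1*\Delta_2$ is vertex decomposable if and only if both $\Delta_1$ and $\Delta_2$ are. I would make this reduction explicit at the start, and then argue both implications by induction on $|V_{G_1}|+|V_{G_2}|$.

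For the easier direction ($G_1$ and $G_2$ vertex decomposable $\Rightarrow G$ vertex decomposable), if both $\Delta(G_1)$ and $\Delta(G_2)$ are simplices (or empty) then so is $\Delta(G)$, covering the base case. Otherwise some $\Delta(G_i)$, say $\Delta(G_1)$, admits a shedding vertex $x$. The plan is to show that the same $x$ is a shedding vertex for $\Delta(G)$. The two identities
\[
\del_{\Delta(G)}(x)=\del_{\Delta(G_1)}(x)*\Delta(G_2),\qquad
\lk_{\Delta(G)}(x)=\lk_{\Delta(G_1)}(x)*\Delta(G_2),
\]
reduce vertex decomposability of $\del_{\Delta(G)}(x)$ and $\lk_{\Delta(G)}(x)$ to the inductive hypothesis. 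The facet condition follows from the basic fact that facets of a join are exactly the (disjoint) unions of a facet of each factor, combined with the assumed facet condition for $x$ in $\Delta(G_1)$.

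For the converse, assume $\Delta(G)$ is vertex decomposable. If $\Delta(G)$ is a simplex, then $G$ has no edges, so neither does $G_1$ or $G_2$, and both $\Delta(G_i)$ are simplices. Otherwise pick a shedding vertex $x$ of $\Delta(G)$. Without loss of generality $x\in V_{G_1}$, and the same two join identities displayed above apply. By induction the two joins $\del_{\Delta(G_1)}(x)*\Delta(G_2)$ and $\lk_{\Delta(G_1)}(x)*\Delta(G_2)$ being vertex decomposable forces each factor to be vertex decomposable; in particular $\Delta(G_2)$ is vertex decomposable, and $\del_{\Delta(G_1)}(x)$ and $\lk_{\Delta(G_1)}(x)$ are vertex decomposable. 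It remains to verify that $x$ satisfies the facet condition inside $\Delta(G_1)$: a facet $F_1$ of $\del_{\Delta(G_1)}(x)$ joined with any facet $F_2$ of $\Delta(G_2)$ yields a facet of $\del_{\Delta(G)}(x)$, which by hypothesis is a facet of $\Delta(G)=\Delta(G_1)*\Delta(G_2)$; by the join-facet description, $F_1$ must then be a facet of $\Delta(G_1)$. Induction completes the argument.

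The main obstacle is bookkeeping rather than a deep idea: one must be careful that the shedding (facet) condition transfers correctly through the join in both directions, and that the induction bottoms out properly when one of the factors is already a simplex or empty. Once the identification $\Delta(G)=\Delta(G_1)*\Delta(G_2)$ and the join formulas for link, deletion, and facets are in hand, everything is essentially formal.
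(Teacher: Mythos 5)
Your argument is correct: identifying $\Delta(G_1\cup G_2)$ with the join $\Delta(G_1)*\Delta(G_2)$ and transferring shedding vertices through the join formulas for deletion, link, and facets is exactly the standard proof. The paper itself gives no proof of this lemma (it is quoted from Woodroofe's Lemma~20 in \cite{W}), and your proposal matches the approach taken in that cited source.
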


For any subset $S \subseteq V_G$ in $G$, we let $G \setminus S$ denote
the graph obtained by removing all the vertices of $S$ from
$G$, and any edge which has at least one of its endpoints in $S$.
When $S = \{x\}$ we abuse notation and write $G \setminus x$.
Also, we let $N(x) := \{y \in V_G ~|~ \{x,y\}
\in E_G\}$ be the set of {\bf neighbours} of $x$.
To prove our main result, we will proceed by
induction;  the following lemma 
(see \cite[Lemma 4.2]{DE}) will facilitate this induction:

\begin{lemma}\label{simplicialvertex}
Let $G$ be a graph, and suppose that $x,y \in V_G$ are 
two vertices such that $\{x\} \cup N(x) \subseteq \{y\} \cup N(y)$.
If $G \setminus y$ and $G \setminus (\{y\} \cup N(y))$ are both
vertex decomposable, then $G$ is vertex decomposable.
\end{lemma}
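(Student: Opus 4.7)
The plan is to verify that $y$ itself serves as a shedding vertex for $\Delta(G)$, so that clause $(ii)$ of the definition of vertex decomposability can be applied. This requires three things: (a) $\del_{\Delta(G)}(y)$ is vertex decomposable, (b) $\lk_{\Delta(G)}(y)$ is vertex decomposable, and (c) every facet of $\del_{\Delta(G)}(y)$ is already a facet of $\Delta(G)$. First I would unwind the definitions to identify $\del_{\Delta(G)}(y) = \Delta(G \setminus y)$ and $\lk_{\Delta(G)}(y) = \Delta(G \setminus (\{y\} \cup N(y)))$, using the standard observation that independent sets of $G$ not meeting $y$ are exactly the independent sets of $G \setminus y$, and that independent sets $F$ with $F \cup \{y\}$ still independent are exactly the independent subsets of $V_G \setminus (\{y\} \cup N(y))$. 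Conditions (a) and (b) are then immediate from the two hypotheses of the lemma.

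The substance of the proof, and where the containment $\{x\} \cup N(x) \subseteq \{y\} \cup N(y)$ must be used, lies in condition (c). I would proceed by contradiction: assume some facet $F$ of $\Delta(G \setminus y)$ fails to be a facet of $\Delta(G)$. Since $F$ is maximal among independent sets of $G$ that avoid $y$, the only way to enlarge $F$ inside $\Delta(G)$ is by adjoining $y$ itself; hence $F \cup \{y\}$ is independent in $G$ and so $F \cap N(y) = \emptyset$. Next, since $x \neq y$ and $x \in \{x\} \cup N(x) \subseteq \{y\} \cup N(y)$, the containment forces $x \in N(y)$, which gives $x \notin F$.

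Maximality of $F$ in $\Delta(G \setminus y)$ now implies that $F \cup \{x\}$ is not independent in $G \setminus y$, so some $z \in F$ is a neighbour of $x$, i.e.\ $z \in N(x) \subseteq \{y\} \cup N(y)$. Since $z \in F$ and $y \notin F$ we must have $z \neq y$, and therefore $z \in N(y) \cap F$, contradicting $F \cap N(y) = \emptyset$. This contradiction establishes (c) and completes the argument. I do not anticipate any serious obstacle beyond isolating this single step: the whole proof reduces to the combinatorial observation that the domination condition $N(x) \subseteq \{y\} \cup N(y)$ allows one to transfer any potential obstruction to maximality of $F$ in $G$ back to an obstruction already present in $G \setminus y$, where $F$ is maximal by assumption.
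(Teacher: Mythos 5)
Your proof is correct: the identifications $\del_{\Delta(G)}(y) = \Delta(G \setminus y)$ and $\lk_{\Delta(G)}(y) = \Delta(G \setminus (\{y\} \cup N(y)))$ are right, and your contradiction argument for the shedding condition correctly uses the domination hypothesis to turn a failure of maximality of $F$ in $\Delta(G)$ into a neighbour of $x$ lying in $F \cap N(y)$. The paper itself does not prove this lemma but simply cites \cite[Lemma 4.2]{DE}, and your argument is exactly the standard shedding-vertex proof given there, so there is nothing to add.
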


Vertex decomposability was introduced, in part, as a tool
to study the shellability of a simplicial complex. 
We review the relevant connections.

\begin{definition}\label{shellabledefn}
A simplicial complex $\Delta$ is {\bf shellable}  if 
the facets of $\Delta$ can be ordered, say  $F_1,\ldots,F_s$, 
such that
 for all $1\leq i<j\leq s$, there 
exists some $x\in F_j\setminus F_i$ and some 
$\ell\in \{1,\ldots,j-1\}$ with $F_j\setminus F_\ell= \{x\}$. 
If $\Delta$ is pure, we call $\Delta$ {\bf pure shellable}.
\end{definition}

As in \cite{VTV}, we call a graph $G$ a {\bf shellable graph} if
$\Delta(G)$ is a shellable simplicial complex.
A vertex decomposable graph is then shellable because
of the following more general result (see \cite[Theorem 11.3]{BW1}):

\begin{theorem}\label{vd->shellable}
 If $\Delta$ is a vertex decomposable 
simplicial complex, then $\Delta$ is also shellable.
\end{theorem}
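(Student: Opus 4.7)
The plan is to proceed by induction on the number of vertices of $\Delta$. For the base case, if $\Delta$ is empty or a simplex, it is trivially shellable (nothing to check, or only one facet). For the inductive step, vertex decomposability supplies a vertex $x$ such that both $\del_\Delta(x)$ and $\lk_\Delta(x)$ are vertex decomposable, and, crucially, every facet of $\del_\Delta(x)$ is a facet of $\Delta$. Since these two subcomplexes live on strictly fewer vertices, the inductive hypothesis yields a shelling $F_1,\ldots,F_r$ of $\del_\Delta(x)$ and a shelling $G_1,\ldots,G_s$ of $\lk_\Delta(x)$.

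I would propose the concatenated ordering
\[
F_1,\ldots,F_r,\; G_1\cup\{x\},\ldots, G_s\cup\{x\}
\]
as a shelling of $\Delta$. First I would verify that this lists precisely the facets of $\Delta$: every facet of $\Delta$ containing $x$ has the form $G\cup\{x\}$ for some facet $G$ of $\lk_\Delta(x)$, while every facet of $\Delta$ not containing $x$ is, by condition $(ii)$ of vertex decomposability, a facet of $\del_\Delta(x)$.

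The shelling condition would then be checked by splitting any pair $i<j$ into three cases: (a) $i<j\le r$, which is immediate from the shelling of $\del_\Delta(x)$; (b) $r<i<j$, which follows from the shelling of $\lk_\Delta(x)$ after the index shift by $r$, because the added vertex $x$ cancels in every set difference between facets of the form $G\cup\{x\}$; and (c) the mixed case $i\le r<j$, where $F_i$ omits $x$ while $F_j=G_{j-r}\cup\{x\}$ contains it. In case (c) I would take the witnessing vertex to be $x$ itself, which certainly lies in $F_j\setminus F_i$.

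The real obstacle is locating an appropriate earlier facet $F_\ell$ in case (c): I need $\ell<j$ with $F_j\setminus F_\ell=\{x\}$, equivalently, a facet $F_\ell$ of $\Delta$ occurring earlier in the list that contains $G_{j-r}$ but not $x$. Since $G_{j-r}$ is a face of $\del_\Delta(x)$, it is contained in some facet of $\del_\Delta(x)$. This is exactly where condition $(ii)$ is indispensable: without it, this facet of $\del_\Delta(x)$ could be properly enlarged by $x$ inside $\Delta$ and thus fail to appear among $F_1,\ldots,F_r$. Granted $(ii)$, such a facet is some $F_\ell$ with $\ell\le r<j$, completing the verification and hence the induction.
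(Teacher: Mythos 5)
Your proof is correct. The paper does not actually prove this statement itself --- it quotes it as Theorem 11.3 of Bj\"orner and Wachs \cite{BW1} --- and your argument (listing a shelling of $\del_\Delta(x)$ followed by the facets $G\cup\{x\}$ in a shelling order of $\lk_\Delta(x)$, and using condition $(ii)$ to produce the earlier facet witnessing $F_j\setminus F_\ell=\{x\}$ in the mixed case) is precisely the standard inductive proof given in that reference.
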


A graded $R$-module $M$ is called 
{\bf sequentially Cohen-Macaulay} (over $k$)
if there exists a finite filtration of graded $R$-modules
$0 = M_0 \subset M_1 \subset \cdots \subset M_r = M$
such that each quotient $M_i/M_{i-1}$ is Cohen-Macaulay, and the
 Krull dimensions of the
quotients are increasing:
$\dim (M_1/M_0) < \dim (M_2/M_1) < \cdots < \dim (M_r/M_{r-1}).$
As first shown by Stanley \cite{S}, shellability implies
sequentially Cohen-Macaulayness. 

\begin{theorem}\label{shellable->scm}
Let $\Delta$ be a simplicial complex, and suppose that $R/I_{\Delta}$
is the associated Stanley-Reisner ring.  If $\Delta$ is shellable, then
$R/I_{\Delta}$ is sequentially Cohen-Macaulay.  
\end{theorem}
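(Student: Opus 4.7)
The plan is to follow Stanley's original approach: use the shelling order of $\Delta$ to construct an explicit filtration of $R/I_\Delta$ whose successive quotients are Cohen-Macaulay modules of strictly increasing Krull dimensions. The combinatorial input that makes this possible is Björner and Wachs's theory of non-pure shellability (\cite{BW,BW1}), which will let me extract pure shellings of the natural dimension-graded pieces of $\Delta$ from the given non-pure shelling.

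More concretely, let $d_1 < d_2 < \cdots < d_r$ be the distinct dimensions of facets of $\Delta$, and for each $i$ let $\Sigma_i$ denote the subcomplex of $\Delta$ generated by all facets of dimension at least $d_i$. This gives a descending chain $\Delta = \Sigma_1 \supseteq \Sigma_2 \supseteq \cdots \supseteq \Sigma_r$, with $\Sigma_r$ pure of dimension $d_r$, and correspondingly an ascending chain $I_\Delta = I_{\Sigma_1} \subseteq I_{\Sigma_2} \subseteq \cdots \subseteq I_{\Sigma_r}$ of Stanley-Reisner ideals. This chain induces a filtration of $R/I_\Delta$ by the submodules $M_i := I_{\Sigma_{i+1}}/I_\Delta$, with $M_0 = 0$ and $M_r = R/I_\Delta$. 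I would then argue that each successive quotient $M_i/M_{i-1}$ is---up to a routine module-theoretic identification---the Stanley-Reisner ring of the pure subcomplex of $\Delta$ generated by facets of dimension exactly $d_i$, and that this pure subcomplex is itself shellable via the subsequence of the original shelling consisting of facets of that dimension (this is essentially Björner and Wachs's pure restriction lemma).

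Once the pieces are pinned down, Cohen-Macaulayness follows from the classical fact that pure shellable complexes are Cohen-Macaulay: by Reisner's criterion it suffices to check that the link of every face has reduced homology concentrated in top degree, and links of pure shellable complexes are themselves pure shellable and homotopy equivalent to a wedge of top-dimensional spheres. Thus each $M_i/M_{i-1}$ is Cohen-Macaulay of Krull dimension $d_i + 1$, and the strict inequalities $d_1 < d_2 < \cdots < d_r$ give exactly the dimensional increase demanded by the definition of sequentially Cohen-Macaulay.

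The main obstacle will be the identification in the filtration step: a quotient of two Stanley-Reisner ideals is not in general the Stanley-Reisner ring of any complex. One must either refine the filtration, work with the primary decomposition $I_\Delta = \bigcap_{F\text{ facet}} P_F$ grouped by $\dim F$ (which produces a more intrinsic filtration by submodules whose associated primes have dimension at most $d_i + 1$), or verify by direct computation that $M_i/M_{i-1}$ has the same local cohomological profile as the Stanley-Reisner ring of the pure subcomplex of dimension $d_i$. The Björner-Wachs restriction operators $r(F_j)$ attached to the shelling of $\Delta$ are the combinatorial bookkeeping needed to make this matching precise, and extracting the correct filtration from them is where the bulk of the technical work lies.
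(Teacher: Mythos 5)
First, a point of reference: the paper does not prove this statement at all --- it is quoted as a known theorem of Stanley, with \cite{S} as the source --- so your proposal can only be judged on its own merits, not against an argument in the text.

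On its own merits, the proposal has a genuine gap, and it sits exactly where you say ``the bulk of the technical work lies.'' The identification you assert in the second paragraph --- that $M_i/M_{i-1} \cong I_{\Sigma_{i+1}}/I_{\Sigma_i}$ is, up to routine bookkeeping, the Stanley--Reisner ring of the pure subcomplex generated by the facets of dimension exactly $d_i$ --- is false in general. Take $\Delta$ on $\{x,y,z,w,u\}$ with facets $F_1=\{x,y,z\}$, $F_2=\{z,w\}$, $F_3=\{x,u\}$ (shellable in this order). Here $I_\Delta=(xw,yw,yu,zu,wu)$, $\Sigma_2=\langle F_1\rangle$ with $I_{\Sigma_2}=(w,u)$, so the first quotient is $(w,u)/I_\Delta$, a module requiring two generators; it cannot be isomorphic, even after a shift, to the cyclic module $R/I_\Gamma$ for $\Gamma=\langle\{z,w\},\{x,u\}\rangle$. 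Since the only way shellability enters your argument is through this identification (shellability of $\Gamma$ was to supply Cohen--Macaulayness of the quotient), the proof does not close: you are left needing to show directly that each $I_{\Sigma_{i+1}}/I_{\Sigma_i}$ is Cohen--Macaulay of dimension $d_i+1$, and none of the three fallback options you list is carried out. Note also that an unconditional identification cannot exist, because your filtration is defined for every complex while the conclusion fails for non-shellable ones.

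The standard way to repair this is to abandon the explicit filtration and instead invoke Duval's criterion: $R/I_\Delta$ is sequentially Cohen--Macaulay if and only if every pure $i$-skeleton $\Delta^{[i]}$ --- the subcomplex generated by \emph{all} faces of dimension $i$, which is generally larger than the subcomplex generated by the facets of dimension $i$ that you use --- is Cohen--Macaulay. Bj\"orner and Wachs prove that the pure skeleta of a (non-pure) shellable complex are shellable, and then the classical ``pure shellable implies Cohen--Macaulay'' argument you correctly sketch (Reisner's criterion plus the wedge-of-spheres homotopy type of links) finishes the proof. Your outline contains all of these ingredients but wires them together through a filtration step that does not hold.
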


Before coming to our main result, we need two facts from 
\cite{VTV} about SCM graphs.

\begin{theorem}[{\cite[Theorem 3.3]{VTV}}]\label{removevertex}
Let $x \in V_G$ be any vertex of $G$ and set $G' = G \setminus (\{x\} \cup N(x))$.
If $G$ is SCM, then $G'$ is SCM.
\end{theorem}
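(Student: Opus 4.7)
The plan is to translate the claim into a question about a link in $\Delta(G)$ and then invoke the preservation of sequential Cohen-Macaulayness under taking links.

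The first step is to verify the combinatorial identity
\[
\Delta(G') = \lk_{\Delta(G)}(x).
\]
A subset $F \subseteq V_G$ is a face of $\lk_{\Delta(G)}(x)$ exactly when $F$ is independent in $G$, $x \notin F$, and $F \cup \{x\}$ is independent in $G$; together the last two conditions are equivalent to $F \subseteq V_G \setminus (\{x\} \cup N(x))$. Hence faces of the link are precisely the independent sets of $G$ lying in $V_{G'}$, which are exactly the independent sets of $G'$.

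The second step reduces the theorem to the statement that if $\Delta$ is SCM and $\sigma \in \Delta$, then $\lk_\Delta(\sigma)$ is SCM. The cleanest route uses Duval's characterization: $\Delta$ is SCM iff its pure $i$-skeleton $\Delta^{[i]}$ (the subcomplex whose facets are the $i$-dimensional facets of $\Delta$) is Cohen-Macaulay for every $i$. Combined with Reisner's theorem (links of Cohen-Macaulay complexes are Cohen-Macaulay), and the compatibility between pure skeletons of a link and links of appropriate pure skeletons of $\Delta$, one shows that each pure skeleton of $\lk_{\Delta(G)}(x) = \Delta(G')$ is Cohen-Macaulay, so $G'$ is SCM.

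The main obstacle is the combinatorial bookkeeping needed to relate $(\lk_\Delta(\sigma))^{[i]}$ to a link inside $\Delta^{[i + \dim \sigma + 1]}$; once established, the result follows immediately from Reisner's criterion. An alternative, purely algebraic, route would push the SCM filtration of $R/I(G)$ through the natural surjection $R/I(G) \twoheadrightarrow R''/I(G')$ obtained by killing the variables in $\{x\} \cup N(x)$, and then verify that the resulting subquotients remain Cohen-Macaulay with strictly increasing Krull dimension; this path works but demands careful attention to how the chosen variables interact with the filtration, which is why I would favor the link approach.
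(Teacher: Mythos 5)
The paper does not prove this statement itself---it imports it from \cite{VTV}---but your argument is essentially the proof given in that source: identify $\Delta(G')$ with $\lk_{\Delta(G)}(x)$ (your verification of this identity is correct) and then invoke the fact that links of sequentially Cohen--Macaulay complexes are sequentially Cohen--Macaulay, which is established via Duval's skeleton criterion exactly as you outline. One correction: the pure $i$-skeleton $\Delta^{[i]}$ appearing in Duval's theorem is the subcomplex generated by \emph{all} $i$-dimensional faces of $\Delta$, not by the $i$-dimensional facets; with your parenthetical definition the criterion fails (e.g., the disjoint union of a triangle and an edge would wrongly be declared SCM), though the remainder of your sketch clearly intends, and requires, the correct notion.
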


\begin{lemma}[{\cite[Theorem 3.7]{VTV}}]\label{onevertexdegree1}
Let $G$ be a bipartite graph.  If $G$ is SCM, then there exists
a vertex $x \in V_G$ such that $\deg x = 1$.
\end{lemma}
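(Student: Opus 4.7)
The plan is to prove the contrapositive by induction on $|V_G|$: if $G$ is a bipartite graph in which every vertex has degree at least $2$, then $R/I(G)$ is not SCM. A disjoint-union reduction (using that $R/I(G_1 \sqcup G_2) \cong R_1/I(G_1) \otimes_k R_2/I(G_2)$, and that SCM is preserved under tensor products of Stanley--Reisner rings over $k$) lets me assume $G$ is connected. The base case is $C_4$, the smallest connected bipartite graph with minimum degree at least $2$; a direct calculation shows $\mathrm{depth}(R/I(C_4)) = 1 < 2 = \dim R/I(C_4)$, so $C_4$ is not even Cohen--Macaulay, hence not SCM.

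For the inductive step, the workhorse is Theorem \ref{removevertex}: for any $x \in V_G$, the induced subgraph $G' = G \setminus(\{x\} \cup N(x))$ is again bipartite SCM, and strictly smaller. I would choose $x$ in one bipartition class, say $V_1$, with $N(x) \subseteq V_2$ maximal under inclusion among $\{N(v) : v \in V_1\}$. The inductive hypothesis applied to $G'$ produces a vertex $z$ with $\deg_{G'}(z) = 1$. If $\deg_G(z) = 1$ we are done; otherwise $z$ must have shed $G$-neighbors in passing to $G'$, so $N_G(z) \cap N(x) \neq \emptyset$. Combined with the maximality of $N(x)$, the hope is to produce two vertices of $V_1$ whose neighborhoods are comparable in a forbidden way, which in turn exhibits a substructure of $G$ incompatible with SCM.

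The main obstacle is that last translation: converting the combinatorial neighborhood containment into an explicit contradiction with the Cohen--Macaulay filtration. One natural route is via the Alexander dual: a theorem of Herzog--Sbarra characterizes $R/I(G)$ as SCM if and only if $I(G)^{\vee}$, generated by the minimal vertex covers of $G$, is componentwise linear. In bipartite graphs, thanks to the refined structural results of \cite{HH2, HHZ, FVT}, minimal vertex covers admit a transparent poset description, and the no-leaf hypothesis should prevent the required linear strands from closing up. Pinning down the precise Betti-number obstruction, or equivalently exhibiting an explicit chain in the vertex-cover poset that blocks componentwise linearity, is the technical heart of the argument, and is where the bipartite hypothesis becomes essential.
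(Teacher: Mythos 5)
There is a genuine gap here, and you acknowledge it yourself: the step that actually derives a contradiction from the SCM hypothesis is left as a ``hope.'' Everything up to that point is scaffolding (the reduction to connected graphs, the $C_4$ base case, and the use of Theorem \ref{removevertex} are all fine), but the decisive move --- turning a neighbourhood containment among vertices of $V_1$ into a violation of sequential Cohen--Macaulayness --- is precisely the content of the lemma, and it is not supplied. Worse, the proposed obstruction is pointing in an unpromising direction: comparability of neighbourhoods is \emph{not} forbidden for SCM bipartite graphs. In the Herzog--Hibi description of Cohen--Macaulay bipartite graphs the neighbourhoods $N(x_1),\dots,N(x_m)$ can be arranged into a highly comparable (poset/chain) configuration, so no ``forbidden comparability'' substructure exists to be exhibited. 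Whatever certificate of non-SCM-ness you extract must be of a different nature, and neither the filtration definition nor the componentwise-linearity reformulation is actually brought to bear on it in your sketch.

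There is also a structural hole in the induction itself. Your inductive hypothesis (the contrapositive) applied to $G' = G\setminus(\{x\}\cup N(x))$ only yields a vertex of degree $\leq 1$ in $G'$, and it yields nothing at all when $G'$ has no edges --- which happens exactly when $\{x\}\cup N(x)$ dominates $G$. The complete bipartite graphs $K_{m,n}$ with $m,n\geq 2$ are the cleanest instance: for every choice of $x$ the graph $G'$ is a set of isolated vertices, which is (vacuously) SCM and has no degree-one vertex, so your inductive step produces no contradiction even though these graphs all violate the lemma. Any repair must handle this ``dominating closed neighbourhood'' case separately. Note finally that the paper under review does not prove this statement at all; it imports it verbatim as \cite[Theorem 3.7]{VTV}, where the argument rests on structural results about bipartite SCM complexes (via pure skeletons and shellability) rather than on the neighbourhood-removal induction you propose, so there is no in-paper proof for your sketch to shortcut.
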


We can now prove the main theorem of this section:

\begin{theorem}\label{maintheorem1}
Let $G$ be a bipartite graph.
Then the following are equivalent:
\begin{enumerate}
\item[$(i)$] $G$ is SCM.
\item[$(ii)$] $G$ is shellable.
\item[$(iii)$] $G$ is vertex decomposable.
\end{enumerate}
\end{theorem}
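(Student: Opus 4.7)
The plan is to prove the implications in the cycle $(iii) \Rightarrow (ii) \Rightarrow (i) \Rightarrow (iii)$. The first two are immediate from results quoted above: $(iii) \Rightarrow (ii)$ is Theorem \ref{vd->shellable}, and $(ii) \Rightarrow (i)$ is Theorem \ref{shellable->scm} applied to the Stanley-Reisner ring of $\Delta(G)$. So the content is entirely in $(i) \Rightarrow (iii)$: every SCM bipartite graph is vertex decomposable.

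For $(i) \Rightarrow (iii)$, I would induct on $|V_G|$, with trivial base case (a single vertex, or the empty graph, is vertex decomposable). Thanks to Lemma \ref{connectedcomponent}, both SCM-ness and vertex decomposability respect disjoint unions, so I may assume $G$ is connected. By Lemma \ref{onevertexdegree1}, pick a vertex $x \in V_G$ with $\deg x = 1$, and let $y$ be its unique neighbor. Then
\[
\{x\} \cup N(x) = \{x,y\} \subseteq \{y\} \cup N(y),
\]
so the pair $(x,y)$ satisfies the hypothesis of Lemma \ref{simplicialvertex}. To apply that lemma, I only need to verify that $G \setminus y$ and $G \setminus (\{y\} \cup N(y))$ are both vertex decomposable.

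The second graph is immediate: by Theorem \ref{removevertex} applied to the vertex $y$, the graph $G \setminus (\{y\} \cup N(y))$ is SCM, and it is clearly still bipartite, so the induction hypothesis gives vertex decomposability. For $G \setminus y$, note that since $N(x) = \{y\}$, the vertex $x$ is isolated in $G \setminus y$, hence $G \setminus y$ is the disjoint union of the isolated vertex $\{x\}$ and the graph $G \setminus \{x,y\} = G \setminus (\{x\} \cup N(x))$. By Theorem \ref{removevertex} applied to $x$, this latter graph is SCM and bipartite, so the induction hypothesis gives it is vertex decomposable; combining with the trivially vertex-decomposable isolated vertex via Lemma \ref{connectedcomponent} shows $G \setminus y$ is vertex decomposable. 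Then Lemma \ref{simplicialvertex} delivers $(iii)$.

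The step I expect to be the subtle one is setting up the induction correctly so that both Theorem \ref{removevertex} and Lemma \ref{onevertexdegree1} can be marshalled at the right vertex; the trick is recognizing that the degree-$1$ vertex $x$ guaranteed by Lemma \ref{onevertexdegree1} automatically produces the containment $\{x\} \cup N(x) \subseteq \{y\} \cup N(y)$ required by Lemma \ref{simplicialvertex}, which is what makes the induction close cleanly. Everything else amounts to verifying that removing the closed neighborhood of a vertex in a bipartite SCM graph leaves a bipartite SCM graph, which is the content of Theorem \ref{removevertex}.
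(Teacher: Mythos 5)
Your proof is correct and follows essentially the same route as the paper: identify a degree-one vertex $x$ via Lemma \ref{onevertexdegree1}, apply Theorem \ref{removevertex} and induction to $G \setminus (\{y\} \cup N(y))$ and to $G \setminus (\{x\}\cup N(x))$ (the latter plus the isolated vertex $x$ giving $G\setminus y$ via Lemma \ref{connectedcomponent}), and conclude with Lemma \ref{simplicialvertex}. The only cosmetic difference is your preliminary reduction to connected $G$, which the paper does not make and which is not needed for the argument.
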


\begin{proof}
Note that $(iii) \Rightarrow (ii) \Rightarrow (i)$ always holds for
any graph $G$ by Theorems \ref{vd->shellable} and \ref{shellable->scm}.  It
suffices to show that when $G$ is bipartite, then $(i) 
\Rightarrow (iii)$.

We do a proof by induction on $n$, the number of vertices.  When
$n=2$, then $G$ consists of a single edge.  This graph is SCM (in fact, CM),
and the independence complex is a simplex, hence vertex decomposable.
We therefore suppose that $n > 2$.  
By Lemma \ref{onevertexdegree1}, we know that there is 
a vertex $x$ of degree 1;  let $y$ denote
the unique neighbour of $x$.

Set $G_1 = G \setminus (\{x\} \cup N(x))$ and $G_2 = G \setminus (\{y\} \cup N(y))$.
By Theorem \ref{removevertex}, both of these graphs are SCM, and thus by
induction, $G_1$ and $G_2$ are vertex decomposable.  
Let $G_1'$ be the graph obtained by adding the isolated vertex $x$ to $G_1$.  
Because $x$ is only adjacent to $y$, the graph $G_1'$ is the
same as the graph $G \setminus y$.  Furthermore,
since $G_1$ is vertex decomposable, then 
so is $G_1'$ by Lemma \ref{connectedcomponent}.
So $G \setminus y$ and $G \setminus (\{y\} \cup N(y))$ are vertex decomposable,
and because $\{x\} \cup N(x) \subseteq \{y\} \cup N(y)$, 
Lemma \ref{simplicialvertex} thus implies that $G$ is vertex decomposable.
\end{proof}

\begin{remark}
The equivalence of $(i)$ and $(ii)$ was first 
proved in \cite{VTV}.  Theorem \ref{maintheorem1} 
further highlights the combinatorial
nature of SCM bipartite graphs.
\end{remark}

The equivalence of $(i)$ and $(ii)$ in the corollary below  was
first proved in \cite{EV}.  In the proof 
(and in the next section) we require the following notion:
a subset $W \subseteq V_G$ is a {\bf vertex cover} if for every
$e \in E_G$, we have $W \cap e \neq \emptyset$.  A {\bf minimal
vertex cover} is any vertex cover $W$ with the property
that for every $x \in W$, $W \setminus \{x\}$ is not a vertex cover.
 
\begin{corollary} Let $G$ be a bipartite graph.  Then the following
are equivalent:
\begin{enumerate}
\item[$(i)$] $G$ is Cohen-Macaulay.
\item[$(ii)$] $G$ is pure shellable.
\item[$(iii)$] $G$ is pure vertex decomposable.
\end{enumerate}
\end{corollary}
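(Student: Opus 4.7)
The plan is to reduce the corollary to Theorem \ref{maintheorem1} by tracking the purity condition throughout, and to rely on the classical Reisner--Stanley result that pure shellable complexes are Cohen--Macaulay. The critical bridging fact is the one stated in the introduction: a bipartite graph is Cohen--Macaulay if and only if it is both SCM and unmixed, and unmixedness of $G$ is equivalent to $\Delta(G)$ being pure (the facets of $\Delta(G)$ are exactly the complements of the minimal vertex covers of $G$).

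For $(iii) \Rightarrow (ii)$, suppose $G$ is pure vertex decomposable. By definition $\Delta(G)$ is pure and vertex decomposable, so Theorem \ref{vd->shellable} yields a shelling of $\Delta(G)$. Since all facets have the same dimension, this shelling is automatically a pure shelling, giving $(ii)$.

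For $(ii) \Rightarrow (i)$, this is the classical theorem of Reisner and Stanley strengthening Theorem \ref{shellable->scm}: when $\Delta$ is pure shellable, the Stanley--Reisner ring $R/I_{\Delta}$ is not merely sequentially Cohen--Macaulay but actually Cohen--Macaulay.

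For $(i) \Rightarrow (iii)$, assume $R/I(G)$ is Cohen--Macaulay. Then $R/I(G)$ is in particular sequentially Cohen--Macaulay (taking the trivial one-step filtration), so by Theorem \ref{maintheorem1} the graph $G$ is vertex decomposable. It remains to argue that $\Delta(G)$ is pure. Since $R/I(G)$ is Cohen--Macaulay it is equidimensional, so all minimal primes of $I(G)$ have the same height. These minimal primes correspond bijectively to the minimal vertex covers of $G$, whose complements are the facets of $\Delta(G)$; hence all facets of $\Delta(G)$ share the same cardinality, proving purity. Combined with vertex decomposability, this gives $(iii)$.

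There is no real obstacle here beyond lining up the definitions correctly: once Theorem \ref{maintheorem1} is in hand, the corollary is essentially a bookkeeping argument matching "pure + vertex decomposable" with "pure vertex decomposable" and "SCM + unmixed" with "Cohen--Macaulay." The one external ingredient is the Reisner--Stanley statement that pure shellability implies Cohen--Macaulayness, which is standard and well-documented in the literature.
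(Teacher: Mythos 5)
Your proposal is correct and follows essentially the same route as the paper: both establish $(i)\Rightarrow(iii)$ by combining Theorem \ref{maintheorem1} with the observation that Cohen--Macaulayness forces all minimal vertex covers (hence all facets of $\Delta(G)$) to have the same cardinality, and both dispose of $(iii)\Rightarrow(ii)\Rightarrow(i)$ by the standard general implications for pure complexes. The only difference is that you spell out the Reisner--Stanley step and the equidimensionality argument that the paper leaves implicit or cites to \cite{FVT}.
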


\begin{proof}
A graph $G$ is Cohen-Macaulay if and only if $G$ is SCM 
and $I(G)$ is unmixed, i.e., all of its associated
primes have the same height (see \cite[Lemma 3.6]{FVT}).  
But the associated primes
of an edge ideal correspond to the minimal vertex covers
of $G$ (see \cite{V2}).  The complement of a vertex
cover is an independent set, i.e., a face
of $\Delta(G)$.  Since all the minimal vertex covers
have the same cardinality, so do the facets of $\Delta(G)$,
i.e., $\Delta(G)$ is pure.   So, $(i)$ implies $(iii)$
since $G$ is SCM, and thus $G$ is vertex decomposable by
Theorem \ref{maintheorem1}
and $\Delta(G)$ is pure.  The implications
$(iii) \Rightarrow (ii) \Rightarrow (i)$ hold for any graph.
\end{proof}


\section{The regularity of SCM bipartite graphs}

In this section we will give a formula for the Castelnuovo-Mumford
regularity of $R/I(G)$ when $G$ is SCM and bipartite in terms of the graph $G$.

Associated to an $R$-module $M$ is a {\bf
minimal free graded resolution} of the form:
\[
0 \rightarrow \bigoplus_j R(-j)^{\beta_{p,j}(M)}
\rightarrow \bigoplus_j R(-j)^{\beta_{p-1,j}(M)}
\rightarrow \cdots
\rightarrow  \bigoplus_j R(-j)^{\beta_{0,j}(M)}
\rightarrow M \rightarrow 0
\]
where $p \leq n$ and $R(-j)$ is the $R$-module obtained by shifting
the degrees of $R$ by $j$.  The number $\beta_{i,j}(M)$, the $ij$th 
{\bf graded Betti number} of $M$, equals the number of generators
of degree $j$ in the $i$th syzygy module.  The {\bf Castelnuovo-Mumford
regularity} (or simply regularity) of $M$ is 
\[\reg(M):= \max\{j-i ~|~ \beta_{i,j}(M) \neq 0\}.\]
The {\bf projective dimension} of $M$ is the length of the minimal
free resolution, that is,
\[\pd(M) := \max\{i ~|~ \beta_{i,j}(M) \neq 0 ~~\mbox{for some $j$}\}.\]

As in \cite{HVT}, we say two edges $\{x,y\}$ and $\{w,z\}$ of
$G$ are {\bf 3-disjoint} if the induced subgraph of $G$ on
$\{x,y,w,z\}$ consists of exactly two disjoint edges.  This
condition is equivalent to saying that in the complement graph $G^c$,
i.e., the graph whose edges are precisely the non-edges of $G$,
the induced graph on $\{x,y,w,z\}$ is an induced four-cycle.  Zheng
\cite{Z} (and also in \cite{K}) 
called edges of this type {\bf disconnected}.
For any graph $G$,
we let $a(G)$ denote the maximum number of pairwise 
3-disjoint edges in $G$.

For any graph $G$, Katzman provided the following lower
bound on $\reg(R/I(G))$.

\begin{lemma}[{\cite[Lemma 2.2]{Ka}}]  \label{lowerbound}
For any graph $G$,
$\reg(R/I(G)) \geq a(G)$.
\end{lemma}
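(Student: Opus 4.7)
The plan is to exhibit an induced subgraph $H$ of $G$ whose Castelnuovo--Mumford regularity can be computed directly and already equals $a(G)$, and then appeal to the standard fact that regularity does not decrease under passage to an induced subgraph.

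First I would choose a maximum family of pairwise $3$-disjoint edges $e_1, \ldots, e_t$ in $G$, where $t = a(G)$, and let $W \subseteq V_G$ be the set of $2t$ endpoints of these edges. By the very definition of $3$-disjointness, the induced subgraph $H := G[W]$ has edge set exactly $\{e_1, \ldots, e_t\}$; that is, $H$ is a disjoint union of $t$ edges. Writing $e_i = \{x_i, y_i\}$ and $S = k[W]$, the edge ideal $I(H) = (x_1 y_1, \ldots, x_t y_t)$ is a complete intersection of $t$ quadrics on disjoint sets of variables.

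Second I would compute $\reg(S/I(H))$. Because the generators $x_i y_i$ use pairwise disjoint variables, the minimal free resolution of $S/I(H)$ is the Koszul complex on $t$ forms of degree $2$ (equivalently, the tensor product over $i$ of the length-one Koszul complex $0 \to S(-2) \to S \to S/(x_i y_i) \to 0$). Reading off the shifts gives $\beta_{i,2i}(S/I(H)) = \binom{t}{i}$ for $0 \le i \le t$ and no other nonzero Betti numbers, so $\reg(S/I(H)) = 2t - t = t = a(G)$.

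Third I would transfer this equality to a lower bound for $G$. The passage to an induced subgraph corresponds, on the Stanley--Reisner side, to restricting the independence complex to the subset $W$: one has $\Delta(G)_W = \Delta(G[W]) = \Delta(H)$. By Hochster's formula,
\[
\beta_{i,j}(R/I(G)) \;=\; \sum_{U \subseteq V_G,\ |U|=j} \dim_k \tilde{H}_{j-i-1}(\Delta(G)_U; k),
\]
so every graded Betti number of $S/I(H)$ appears as one summand of a graded Betti number of $R/I(G)$ in the same bidegree. Hence $\reg(R/I(G)) \ge \reg(S/I(H)) = a(G)$.

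The only substantive step here is the third one: the monotonicity $\reg(R/I(G[W])) \le \reg(R/I(G))$ is well known but is not proved in the excerpt, so it is the step that needs to be justified (either by the Hochster-formula argument above or by quoting it). The first two steps are, respectively, a combinatorial unpacking of the definition of $a(G)$ and a direct Koszul-complex calculation, and should present no real obstacle.
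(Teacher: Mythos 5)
Your proof is correct. The paper does not prove this lemma itself but quotes it from Katzman \cite{Ka}, and your argument---pass to the induced subgraph $H$ on the $2a(G)$ endpoints of a maximum family of pairwise $3$-disjoint edges, observe that $3$-disjointness forces $H$ to be a disjoint union of $a(G)$ edges so that $\reg(S/I(H)) = a(G)$ by the Koszul resolution, and then transfer the bound via Hochster's formula (restriction to an induced subgraph cannot increase graded Betti numbers)---is essentially the argument of the cited Lemma 2.2, so there is nothing further to reconcile with the text.
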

\noindent
There are examples
of graphs which have $\reg(R/I(G)) > a(G)$.  However,
as shown below, we have an equality if $G$ is SCM and bipartite.

Given a graph $G$,
we can associate to $G$ another monomial ideal called the 
{\bf cover ideal}:
\[I(G)^{\vee} = \langle \{ x_{i_1}\cdots x_{i_r} ~|~ 
W = \{x_{i_1},\ldots,x_{i_r}\} ~~\mbox{is a minimal vertex cover of $G$}\}\rangle.\]
The notation $I(G)^{\vee}$ is used because $I(G)^{\vee}$ is the Alexander
Dual of the edge ideal.
We require a 
result of Terai about the Alexander Dual of a square-free monomial
ideal.

\begin{theorem}[\cite{T}]\label{Terai} 
Let $I$ be an square-free monomial ideal.
Then $\pd(I^{\vee}) = \reg(R/I)$.
\end{theorem}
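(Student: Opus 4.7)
The plan is to establish Terai's identity via Hochster's formula for the graded Betti numbers of a Stanley--Reisner ring, combined with combinatorial Alexander duality. Write $I = I_{\Delta}$ for a simplicial complex $\Delta$ on the vertex set $V = \{x_1,\ldots,x_n\}$; the key preliminary observation is that $I^{\vee} = I_{\Delta^{\vee}}$, where $\Delta^{\vee} = \{F \subseteq V \mid V \setminus F \notin \Delta\}$ is the Alexander dual simplicial complex.

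First I would apply Hochster's formula
\[
\beta_{i,j}(R/I_{\Delta}) = \sum_{W \subseteq V, \ |W| = j} \dim_k \tilde{H}_{j-i-1}(\Delta_W; k),
\]
where $\Delta_W$ denotes the restriction of $\Delta$ to $W$. From this, $\reg(R/I)$ equals the largest integer $d$ such that $\tilde{H}_{d-1}(\Delta_W; k) \neq 0$ for some $W \subseteq V$. Applying the same formula to $\Delta^{\vee}$ and using the standard shift $\beta_{i,j}(I^{\vee}) = \beta_{i+1,j}(R/I^{\vee})$, I obtain that $\pd(I^{\vee})$ is the largest homological index at which a reduced homology group of some subcomplex of $\Delta^{\vee}$ fails to vanish.

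The bridge between the two sides is combinatorial Alexander duality, which asserts $\tilde{H}_{\ell}(\Gamma; k) \cong \tilde{H}^{m-\ell-3}(\Gamma^{\vee}; k)$ for any simplicial complex $\Gamma$ on $m$ vertices. To compare the two Hochster sums I would identify the Alexander dual $(\Delta_W)^{\vee}$, viewed as a complex on the vertex set $W$, with a suitable link or deletion inside $\Delta^{\vee}$, so that Alexander duality translates each nonvanishing $\tilde{H}_{j-i-1}(\Delta_W; k)$ into a matching nonvanishing cohomology group contributing to $\beta_{\bullet,\bullet}(R/I_{\Delta^{\vee}})$. After rearranging the indices carefully, the maximum value of $j-i$ appearing on the regularity side coincides exactly with the maximum homological degree appearing in the minimal free resolution of $I^{\vee}$, giving the claimed equality.

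The main obstacle will be the index bookkeeping: the restriction operation and the Alexander-dual operation do not literally commute, so one must work carefully with the passage between a subset $W$ and its complement $V \setminus W$, and decide whether the correct object on the dual side is a subcomplex or a link of a face in $\Delta^{\vee}$. Once that identification is pinned down, the remainder of the argument is a direct comparison of the two Betti-number formulas under the duality isomorphism, and no further algebraic input is needed.
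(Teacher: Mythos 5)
First, a point of reference: the paper does not prove this statement at all --- it is quoted from Terai's paper \cite{T} --- so there is no internal proof to compare against, and your proposal must stand on its own. Its ingredients are all correctly stated: $I^{\vee}=I_{\Delta^{\vee}}$, Hochster's formula with your indexing, the shift $\beta_{i,j}(I^{\vee})=\beta_{i+1,j}(R/I^{\vee})$, and combinatorial Alexander duality. The gap is in the final claim that, once the identification of $(\Delta_W)^{\vee}$ with a link is pinned down, what remains is ``a direct comparison of the two Betti-number formulas'' requiring ``no further algebraic input.'' Carry the identification out: for $W\subseteq V$ one has $((\Delta^{\vee})_W)^{\vee}=\lk_{\Delta}(V\setminus W)$ (duals taken inside the vertex set $W$), so Alexander duality within $W$ gives $\tilde{H}_{j-i-2}((\Delta^{\vee})_W;k)\cong \tilde{H}^{i-1}(\lk_{\Delta}(V\setminus W);k)$, and the two Hochster sums collapse to
\[
\pd(I^{\vee})=\max\bigl\{\, i \mid \tilde{H}_{i-1}(\lk_{\Delta}(F);k)\neq 0 \ \text{for some}\ F\in\Delta \,\bigr\},
\qquad
\reg(R/I)=\max\bigl\{\, d \mid \tilde{H}_{d-1}(\Delta_W;k)\neq 0 \ \text{for some}\ W\subseteq V \,\bigr\}.
\]
These two maxima are taken over genuinely different families of complexes --- links of faces of $\Delta$ on one side, induced subcomplexes of $\Delta$ on the other --- and their equality is precisely the content of Terai's theorem, not a formal consequence of duality or of re-indexing. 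A small example shows the comparison cannot be term-by-term: if $\Delta$ has facets $\{x,y\}$ and $\{y,z\}$, then $\Delta_{\{x,z\}}$ has $\tilde{H}_0\neq 0$ while $\Delta=\lk_{\Delta}(\emptyset)$ is contractible; the homology reappears only in $\lk_{\Delta}(y)$.

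To close the gap you need a further input. The standard route (essentially Terai's) is to compute $\pd(R/I^{\vee})=n-\operatorname{depth}(R/I^{\vee})$ by the Auslander--Buchsbaum formula and to evaluate the depth via Hochster's \emph{local cohomology} formula, whose terms are naturally indexed by links of $\Delta^{\vee}$; dualizing those links back to induced subcomplexes of $\Delta$ then yields exactly the regularity expression above. Alternatively one can prove the displayed combinatorial identity directly by a Mayer--Vietoris induction (writing $\Delta$ as the union of $\operatorname{star}_{\Delta}(x)$ and $\del_{\Delta}(x)$, whose intersection is $\lk_{\Delta}(x)$), but either way this is a substantive extra step, not bookkeeping.
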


We now prove the main result of this section.

\begin{theorem}\label{maintheorem2}
If $G$ is a SCM bipartite graph
with $a(G)$ pairwise $3$-disjoint edges, then 
\[\operatorname{reg}(R/(I(G)) = a(G).\]
\end{theorem}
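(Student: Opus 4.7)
The plan is to prove $\reg(R/I(G)) \leq a(G)$; combined with Lemma~\ref{lowerbound}, this yields equality. I would induct on $|V_G|$. If $G$ has no edges, both sides are zero, so assume $E_G \neq \emptyset$. By Lemma~\ref{onevertexdegree1} there is a vertex $x \in V_G$ of degree one; let $y$ be its unique neighbor. The standard short exact sequence
\[
0 \longrightarrow \bigl(R/(I(G):y)\bigr)(-1) \xrightarrow{\,\cdot y\,} R/I(G) \longrightarrow R/(I(G), y) \longrightarrow 0
\]
then gives
\[
\reg(R/I(G)) \leq \max\bigl\{\reg(R/(I(G):y)) + 1,\ \reg(R/(I(G), y))\bigr\}.
\]

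Next I would identify each quotient with the edge ideal of a smaller SCM graph. On one hand $(I(G):y) = (N(y)) + I(G \setminus N[y])$; since $y$ appears in neither summand, $R/(I(G):y)$ agrees with $R'/I(G \setminus N[y])$ up to adjoining the free variable $y$, so $\reg(R/(I(G):y)) = \reg(R'/I(G \setminus N[y]))$. On the other hand $(I(G), y) = (y) + I(G \setminus y)$, and because $N(x) = \{y\}$ the vertex $x$ becomes isolated in $G \setminus y$, so $\reg(R/(I(G), y)) = \reg(R''/I(G \setminus N[x]))$. Both $G \setminus N[y]$ and $G \setminus N[x]$ are SCM and bipartite by Theorem~\ref{removevertex}, so the inductive hypothesis gives $\reg(R/(I(G):y)) = a(G \setminus N[y])$ and $\reg(R/(I(G), y)) = a(G \setminus N[x])$.

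It remains to verify the combinatorial inequalities $a(G \setminus N[x]) \leq a(G)$ and $a(G \setminus N[y]) + 1 \leq a(G)$. The first is immediate, since any pairwise 3-disjoint family of edges in $G \setminus N[x]$ is automatically pairwise 3-disjoint in $G$. For the second, take any maximum pairwise 3-disjoint family in $G \setminus N[y]$ and adjoin the edge $\{x, y\}$: for any edge $e = \{a, b\}$ of that family, $a, b \notin N[y]$ so neither is adjacent to $y$, and since $\deg x = 1$ neither is adjacent to $x$, so the induced subgraph on $\{x, y, a, b\}$ consists of exactly the two disjoint edges $\{x, y\}$ and $\{a, b\}$.

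The main obstacle is precisely this augmentation step: the leaf property $\deg x = 1$ (rather than merely some property of $y$) is what guarantees $\{x, y\}$ is 3-disjoint from every edge avoiding $N[y]$, and this is exactly what Lemma~\ref{onevertexdegree1} provides. I expect the author may alternatively phrase the argument dually by invoking Terai's Theorem~\ref{Terai} to translate the problem into a projective-dimension bound for the cover ideal, but the short exact sequence approach above is self-contained modulo the earlier SCM preservation results and Katzman's lower bound.
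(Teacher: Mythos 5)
Your proposal is correct, and it reaches the theorem by a genuinely different (though mirror-image) route. The paper first applies Terai's theorem to convert the problem into bounding $\pd(I(G)^{\vee})$, then splits the cover ideal as $I(G)^{\vee} = x x_{i_2}\cdots x_{i_t}\, I(G')^{\vee} + y\, I(G'')^{\vee}$ (where $G' = G\setminus(\{y\}\cup N(y))$ and $G'' = G\setminus(\{x\}\cup N(x))$), computes the intersection of the two pieces, and bounds projective dimension from the resulting short exact sequence of cover ideals. You instead stay entirely on the edge-ideal side, using the standard sequence $0 \to (R/(I(G):y))(-1) \to R/I(G) \to R/(I(G),y) \to 0$ together with the identifications $(I(G):y) = (N(y)) + I(G\setminus N[y])$ and $(I(G),y) = (y) + I(G\setminus N[x])$ (the latter because the leaf $x$ becomes isolated once $y$ is deleted). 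Both arguments funnel into exactly the same two smaller SCM bipartite graphs, invoke Theorem~\ref{removevertex} and Lemma~\ref{onevertexdegree1} in the same way, and close with the identical combinatorial inequalities $a(G'') \leq a(G)$ and $a(G') + 1 \leq a(G)$; your verification of the augmentation step --- that $\deg x = 1$ is what makes $\{x,y\}$ 3-disjoint from every edge of $G\setminus N[y]$ --- is in fact spelled out more carefully than in the paper. What your approach buys is self-containedness: it needs only the regularity behaviour in short exact sequences and Katzman's lower bound, with no appeal to Alexander duality. What the paper's approach buys is compatibility with the ``splitting of monomial ideals'' machinery of \cite{FHVT} and a statement about the cover ideal's projective dimension as a byproduct. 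One cosmetic remark: your base case (edgeless graphs) is cleaner than the paper's ($n=2$), and it correctly sidesteps the fact that Lemma~\ref{onevertexdegree1} cannot produce a degree-one vertex when $E_G = \emptyset$.
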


\begin{proof}
By Lemma \ref{lowerbound}, it suffices to show that
$a(G)$ is an upper bound.  By Theorem \ref{Terai}, we 
have  $\reg(R/I(G)) = \pd(I(G)^{\vee})$, so it is enough to
prove that $\pd(I(G)^{\vee}) \leq a(G)$.
We proceed by induction on $n$.
If $n = 2$, then $G$ is single edge $\{x,y\}$, 
and $I(G)^{\vee} = (x,y)$ which has $\pd(I(G)^{\vee}) = 1 \leq 1= a(G)$.

Suppose that $n >2$.  By Lemma \ref{onevertexdegree1}, there exists
a vertex $x$ of degree one.  Let $y$ be the unique neighbour
of $x$, and let $N(y) = \{x,x_{i_2},\ldots,x_{i_t}\}$ be
the neighbours of $y$.
Observe that if $W$ is any
minimal vertex cover of $G$, then it cannot contain
both $x$ and $y$, because if it did, then $W \setminus \{x\}$
would still be a vertex cover.  Also, if $y \not\in W$, then
$N(y) \subseteq V$.  
Set $G' = G \setminus (\{y\} \cup N(y))$ and $G'' = G \setminus (\{x\}
\cup N(x))$.  Let $I(G')^{\vee}$, respectively, $I(G'')^{\vee}$,
denote the cover ideal of $G'$, respectively $G''$, but
viewed as ideals of $R = k[x_1,\ldots,x_n]$.
We need two facts:
\vspace{.2cm}

\noindent
{\bf Claim 1.} $~~~I(G)^{\vee} = xx_{i_2}\cdots x_{i_t} I(G')^{\vee} + yI(G'')^{\vee}.$

\noindent
{\bf Proof of Claim.}  Let $m \in I(G)^{\vee}$ be a generator.  Then
$m$ corresponds to a minimal 
vertex cover of $G$.  It contains
either $x$ or $y$, but not both.  If it contains $x$, then it
contains $N(y) = \{x_{i_2},\ldots,x_{i_t}\}$.  So
$m = xx_{i_2}\cdots x_{i_t}m'$ where $m'$ must be a cover
of $G'$.  So $m \in  xx_{i_2}\cdots x_{i_t} I(G')^{\vee}$.  
If $x \nmid m$, then $m = ym'$, and $m'$ must be
a cover of $G''$.  So $m \in  yI(G'')^{\vee}$, thus showing one
containment.  The reverse direction is proved similarly; each generator
on the right hand side must be a cover of $G$, and so belong to $I(G)^{\vee}$.
\hfill$\Box$
\vspace{.2cm}

\noindent
{\bf Claim 2.} $~~xx_{i_2}\cdots x_{i_t} I(G')^{\vee} \cap yI(G'')^{\vee} = 
yxx_{i_2}\cdots x_{i_t}I(G')^{\vee}.$

\noindent
{\bf Proof of Claim.}  
We have $yxx_{i_2}\cdots x_{i_t}I(G')^{\vee} \subseteq  xx_{i_2}\cdots x_{i_t} I(G')^{\vee}$.  If $m \in yxx_{i_2}\cdots x_{i_t}I(G')^{\vee}$, then $ m = yxm'$ where 
$m'$ is a cover of $G''$ so it is also in the second ideal, and
thus in the intersection.  For the other containment,
if $m$ is the intersection, then $m = xx_{i_2}\cdots x_{i_t}m'$
with $m' \in I(G')^{\vee}$.  But since $m \in yI(G'')^{\vee}$,
we have that $y|m$, and this implies that $m'= ym''$.  
So $m = yxx_{i_2}\cdots x_{i_t}m''$.  But $m''$ has
to be in $I(G')^{\vee}$ because none of its generators 
are divisible by $y$.  
\hfill$\Box$

As a consequence of Claims 1 and 2, we 
have a short exact sequence
\[0 \rightarrow yxx_{i_2}\cdots x_{i_t}I(G')^{\vee}
\longrightarrow  xx_{i_2}\cdots x_{i_t} I(G')^{\vee} \oplus yI(G'')^{\vee}
\longrightarrow I(G)^{\vee} 
\longrightarrow 0.\]
In particular, the short exact sequences gives the
following bound on $\pd(I(G))^{\vee}$:
\[\pd(I(G))^{\vee} \leq \max\{\pd(yxx_{i_2}\cdots x_{i_t}I(G')^{\vee})+1,
\pd(xx_{i_2}\cdots x_{i_t} I(G')^{\vee}), \pd(yI(G'')^{\vee})\}.\]
However, for any monomial ideal $I$ and monomial $m$ with
the property that the support of $m$ is disjoint from
the support of any generator of $I$, we have $\pd(mI) = \pd(I)$.
Thus
\[\pd(I(G)^{\vee}) \leq \max \{\pd(I(G')^{\vee})+1,\pd(I(G'')^{\vee})\}.\]
By induction, $\pd(I(G')^{\vee}) +1 \leq a(G') + 1$ and 
$\pd(I(G'')^{\vee}) \leq a(G'')$. Now $a(G'') \leq a(G)$ is clear,
and $a(G') + 1 \leq a(G)$ since we can take the $a(G')$ $3$-disjoint
edges of $G'$ along with the edge $\{x,y\}$ to form a set
of $a(G')+1$ $3$-disjoint edges in $G$.
\end{proof}

As corollaries, we can recover a result of Zheng \cite{Z}, who
first linked the regularity to the invariant $a(G)$,
and a result of the author with Francisco and H\`a \cite{FHVT}.
The corollary is true because 
trees and  CM bipartite graphs belong to the set of SCM bipartite graphs.
Trees, which are always bipartite, were first shown to be SCM
in \cite{F};  alternative proofs were given in
\cite{DE,FVT,VTV,W}.

\begin{corollary}
If $G$ is either a tree or a CM bipartite
graph, then $\reg(R/I(G)) = a(G)$.
\end{corollary}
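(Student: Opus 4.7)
The plan is to deduce the corollary directly from Theorem \ref{maintheorem2} by verifying that both classes of graphs named in the statement lie inside the class of SCM bipartite graphs. Once this containment is established, the equality $\reg(R/I(G)) = a(G)$ is immediate, so the corollary is really a bookkeeping exercise on top of the main theorem of this section.

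The CM bipartite case is essentially tautological: any Cohen-Macaulay module $M$ is sequentially Cohen-Macaulay through the trivial filtration $0 \subset M$, so a CM bipartite graph is in particular an SCM bipartite graph, and Theorem \ref{maintheorem2} applies.

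For trees, I would use two observations. First, every tree is bipartite because a tree contains no cycles, hence no odd cycles, and absence of odd cycles is the standard characterization of bipartiteness. Second, every tree is SCM; this is Faridi's theorem \cite{F}, with alternative proofs available in \cite{DE,FVT,VTV,W}. Combining these two facts places every tree inside the scope of Theorem \ref{maintheorem2}, which then yields $\reg(R/I(G)) = a(G)$.

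The only real obstacle is the SCM property of trees, which is imported rather than proved here. I would note, however, that a self-contained argument is already available from the machinery of this paper: by Theorems \ref{vd->shellable} and \ref{shellable->scm} it suffices to show that every tree is vertex decomposable, and a short induction on $|V_G|$ using Lemma \ref{simplicialvertex} (take $x$ a leaf of $G$ and let $y$ be its unique neighbour; then $\{x\} \cup N(x) = \{x,y\} \subseteq \{y\} \cup N(y)$ is automatic) together with Lemma \ref{connectedcomponent} to handle possible disconnections in $G \setminus y$ and $G \setminus (\{y\} \cup N(y))$ completes the reduction. Either route leads to the same conclusion, and no further input is needed beyond Theorem \ref{maintheorem2} itself.
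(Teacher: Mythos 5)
Your proposal is correct and follows the same route as the paper: both classes are observed to consist of SCM bipartite graphs (CM implies SCM trivially, and trees are bipartite and SCM by Faridi's theorem), after which Theorem \ref{maintheorem2} gives the equality. The extra sketch showing trees are vertex decomposable via Lemma \ref{simplicialvertex} is a sound bonus but not needed, exactly as the paper relies on the cited references.
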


Kummini \cite{K} recently showed that if $G$
is an {\bf unmixed} bipartite graph, that is, all of its minimal
vertex covers have the same cardinality, then
$\reg(R/I(G)) = a(G)$ also holds.  Thus,
for bipartite graphs, we would like to know an
answer to the question:

\begin{question}
Suppose that $G$ is a bipartite graph that is not
unmixed and not SCM.  What is $\reg(R/I(G))$?
\end{question}

The answer will {\it not} be $a(G)$ in general.  As noted in
\cite{K}, the cycle with eight vertices is
a mixed bipartite graph with two $3$-disjoint edges, but
$\reg(R/I(G)) = 3$.  This graph is also not SCM by
Lemma \ref{onevertexdegree1}.    On the other hand,
H\`a and the author \cite{HVT} proved that $\reg(R/I(G)) \leq  \alpha'(G)$
where $\alpha'(G)$ is the {\bf matching number}, the largest set
of pairwise disjoint edges.  For the same eight cycle, we
have $\alpha'(G) = 4$, and thus $\reg(R/I(G)) < \alpha'(G)$.
It would be nice to determine a formula
for $\reg(R/I(G))$ for all bipartite graphs.


\bibliographystyle{plain}

\end{document}